\newtheorem{thm}{Theorem}[section]
\newtheorem{cor}[thm]{Corollary}
\newtheorem{lem}[thm]{Lemma}
\theoremstyle{definition}
\newtheorem{defn}[thm]{Definition}
\newtheorem{defns}[thm]{Definitions}
\newtheorem{con}[thm]{Construction}
\newtheorem{notn}[thm]{Notation}
\newtheorem{cond}[thm]{Condition}
\theoremstyle{remark}
\newtheorem{rem}[thm]{Remark}
\newcommand{\dz}{\mathrm{d}z}
\newcommand{\nc}{\newcommand}
\nc{\kls}{\mathrm{LabSur}_k}
\nc{\slr}{\mathrm{SL}(2,\mathbb R)}
\nc{\R}{\mathbb R}
\nc{\C}{\mathbb C}
\nc{\Z}{\mathbb Z}
\nc{\N}{\mathbb N}
\nc{\td}{\sim}
\nc{\mc}{\mathcal}
\nc{\ec}{\mc{EC}(T,x)}
\nc{\ov}{\overline}
\nc{\Arg}{\mathrm{Arg}}
\nc{\Hom}{\mathrm{Hom}}
\nc{\dbz}{\mathrm{d}\bar z}
\nc{\vp}{\varphi}
\renewcommand{\d}{\mathrm{d}}
\pgfmathsetmacro{\octagonradius}{3}
\pgfmathsetmacro{\octagonbigradius}{\octagonradius/sin(67.5)}
\let\c@equation\c@thm
\tikzset{nomorepostaction/.code={\let\tikz@postactions\pgfutil@empty}}
\numberwithin{equation}{section}
\title{Periodic Billiards in Isosceles Triangles}
\author{Alex Becker}
\begin{document}

\begin{abstract}
Any periodic trajectory on an isosceles triangle gives rise to a periodic 
trajectory on a right triangle obtained by identifying the halves of the original triangle. We examine the relationship between periodic trajectories on isosceles triangles and
the trajectories on right triangles obtained in this manner, 
and the consequences of this relationship for the existence of stable trajectories on isosceles triangles and the properties of
their orbit tiles.
\end{abstract}

\maketitle

\tableofcontents

\section{Introduction}

We consider an idealized model of a billiard ball on a convex polygonal table, behaving as a point mass with no friction which
collides perfectly elastically with the edges of the table.
Such systems are referred to as ``polygonal billiards" and are a common subject
of study in the field of dynamics. A complete description of the system can be found in a survey of the subject by 
Boldrighini, Keane and Marchetti \cite{Billiards}.
%

Many natural questions about polygonal billiards remain open. 
Most notably, it is unknown whether general polygonal billiards admit periodic trajectories, and whether generic polygonal billiards are ergodic.
An account of the proven and conjectured properties of periodic trajectories can be found in
Section 1 of Schwartz's article \cite{Deg100}.
Restricted cases have been shown to have periodic trajectories, including 
rational polygons as proven by Masur in \cite{RatCase}, and triangles with maximal angles less than $100$ degrees as proven by
Schwartz in \cite{Deg100}. Discussion of ergodicity results can be found in a survey by Gutkin \cite{Survey}. 
Recently there has been considerable interest in two closely related objects associated with periodic trajectories, the
\emph{combinatorial type} and the \emph{orbit tile}. 

\begin{defn}
Let $f$ be a periodic trajectory. The \emph{combinatorial type} of $f$ is the sequence $\sigma_1\cdots\sigma_k$ of edges struck by
$f$. Two combinatorial types are considered equivalent if they are identical or one is obtained by reversing the other.
\end{defn}

\begin{defn}
Given a combinatorial type $C$, the \emph{orbit tile} $\mathcal O(C)$ of $C$ is the set of polygons which admit a periodic trajectory with combinatorial type $C$.
The orbit tile of a periodic trajectory is the orbit tile of its combinatorial type.
\end{defn}

Analysis of these objects has been successfully applied to the problem of finding periodic trajectories,
most notably by Hooper and Schwartz in \cite{NearlyIsosc, Deg1001, Deg100}. This work has suggested that \emph{stable}
periodic trajectories are of particular interest (see for example \cite{RightTri, NearlyIsosc}).

\begin{defn}
A periodic trajectory is \emph{stable} if its orbit tile is open. 
\end{defn}

This paper focuses exclusively on the theory of periodic trajectories up to combinatorial type; we let $\sim$ denote this equivalence.
We restrict our attention to the special case of an isosceles triangle $T$. Hooper and Schwartz showed in \cite{NearlyIsosc}
that all isosceles triangles---and in fact triangles sufficiently close to being isosceles---possess periodic trajectories.
One interesting property of periodic trajectories $f$ on $T$ is that, by identifying each point in $T$ with its
reflection across the line of symmetry, we obtain a periodic trajectory $f'$ on the resulting right triangle $T'$.
This paper studies the relationship between these trajectories, and in particular between their combinatorial types.
This relationship turns out to be governed by whether $T$ satisfies the following condition.
 
\begin{cond}\label{condition}
The base angle of $T$ is either an irrational multiple of $\pi$ or of the form $\frac{a}{b}\pi$ with $a,b\in\Z$ and $b$ even.
\end{cond}

In particular, we prove the following theorem.

\begin{thm}\label{main theorem}
Let $f$ be a periodic trajectory on an isosceles triangle $T$ satisfying Condition ~\ref{condition}.
If $g$ is a periodic trajectory on $T$ and $f'\sim g'$, then $f\sim g$.
\end{thm}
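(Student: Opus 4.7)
The plan is to reduce Theorem~\ref{main theorem} to a statement about a single involution on $T$-combinatorial types and then to prove that statement. Label the two equal edges of $T$ by $A$ and $B$ and the base by $C$; the fold identifies $A$ with $B$ (producing the hypotenuse $H$ of $T'$), halves $C$ (producing the base $C'$ of $T'$), and turns the axis of symmetry into the third leg $S$ of $T'$. Bounces on $A$ or $B$ appear in $\sigma_{f'}$ as bounces on $H$, bounces on $C$ as bounces on $C'$, and each transversal crossing of the axis by $f$ as a bounce on $S$. Given $\sigma_{f'}$ together with a choice of initial half (``upper'' or ``lower''), $\sigma_f$ is uniquely recovered by reading $\sigma_{f'}$ from left to right, replacing each $H$ by $A$ or $B$ according to the current half, each $C'$ by $C$, and deleting each $S$ while flipping the half. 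The two initial-half choices yield $T$-types related by the involution $\tau$ on sequences that swaps $A$ and $B$, and the reflected trajectory $\tilde f = \rho\circ f$ realises $\sigma_{\tilde f} = \tau(\sigma_f)$.

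A short case analysis then shows that $f'\sim g'$ forces $\sigma_g$ to be $\sim_T$-equivalent either to $\sigma_f$ or to $\tau(\sigma_f)$; the $\tau$ arises precisely when the identification of $\sigma_{f'}$ with $\sigma_{g'}$ forces the initial half of the lift to flip. The first case gives $f\sim g$ at once, so the theorem reduces to the key claim that for every periodic trajectory $f$ on $T$,
\[
\sigma_f \sim_T \tau(\sigma_f);
\]
equivalently, every such $f$ is combinatorially equivalent to its reflection $\tilde f$.

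To prove the key claim I first note that a periodic trajectory must cross the axis of symmetry, and indeed an even number of times per period: otherwise the orbit is confined to one half of $T$, hence to the wedge spanned by a single leg and the base, from which billiard trajectories escape. The plan is then to pass to the unfolded translation surface $X$ of $T$, on which $f$ becomes a closed geodesic $\gamma$ and $\rho$ lifts to an involution $\hat\rho$ sending $\gamma$ to a closed geodesic of type $\tau(\sigma_f)$. Condition~\ref{condition} should be exactly what forces $\hat\rho(\gamma)$ to coincide with $\gamma$ (up to orientation) rather than to be a disjoint closed geodesic, from which one reads off the combinatorial symmetry realising $\sigma_f\sim_T\tau(\sigma_f)$. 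The principal obstacle is pinning down precisely why the parity of the denominator of the base angle is the decisive invariant and producing the explicit matching of bounce sequences; in the excluded cases (for instance the equilateral triangle) one expects concrete closed geodesics whose $\hat\rho$-images represent genuinely new $\sim_T$-classes. A secondary difficulty is the irrational case, where $X$ is non-compact and the argument must be phrased intrinsically on the flat structure or obtained by a limiting procedure from rational base angles satisfying the condition.
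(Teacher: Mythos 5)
Your write-up is a plan rather than a proof, and the gap sits exactly at the theorem's heart. The reduction in your first two paragraphs ends with the ``key claim'' $\sigma_f\sim_T\tau(\sigma_f)$, which is essentially a restatement of the theorem in the special case $g=\rho\circ f$ (and, as you note, the general case follows from it by your case analysis); so nothing has been proved until that claim is established. But for the key claim you only sketch a hope: that on the unfolded surface the lifted reflection $\hat\rho$ should, under Condition~\ref{condition}, carry the closed geodesic $\gamma$ to itself up to orientation. You explicitly concede that you cannot yet say why the parity of $b$ is decisive, and that the irrational case is unresolved. Moreover the expected mechanism is misstated: even when $b$ is even, the reflected trajectory is in general a \emph{different} closed geodesic (a different member of the cylinder, or its image under a surface symmetry), not $\gamma$ itself; what one needs is that the reflection agrees, up to reversal and the label-preserving action of $G(T)$ on $R(T)$, with a symmetry that does not change edge labels. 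That is precisely where the parity enters in the paper: when $b$ is even the reflection $r_2$ across the axis satisfies $r_2=r_3(r_1r_3)^{b/2}\in G(T)$, which produces the double cover $\pi:R(T)\to R(T')$ of Lemma~\ref{double-cover} whose deck transformation $\lambda$ satisfies $\lambda\circ f(t)=(r_1r_3)^{b/2}\circ f(1-t)$ and hence preserves combinatorial types; when $b$ is odd, $r_2\notin G(T)$ and the statement fails (Theorem~\ref{converse}). Without this algebraic identity (or an equivalent), your ``short case analysis plus key claim'' does not close.

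Two further points. First, the paper does not prove the theorem by matching bounce sequences directly: it uses Lemma~\ref{same combinatorial type} to convert $f'\sim g'$ into a common cylinder on $R(T')$ (up to reversal and $G(T')$), lifts that cylinder through $\pi$ using $\dz_T=\dz_{T'}\circ\d\pi$, and then uses the label-preserving deck transformation to identify the lift of $g$ with a trajectory in the same cylinder as $f$; some such lifting (or an equally explicit combinatorial substitute) is what your outline still owes. Second, the irrational case is not a ``secondary difficulty'' to be handled by an unspecified limiting procedure: the paper reduces it to the even rational case through the structure of orbit tiles (Lemma~\ref{rational}, resting on Corollary~\ref{tiles classification}), namely that an orbit tile containing an irrational isosceles triangle is open or an open subset of the isosceles line and hence contains isosceles triangles with even denominator. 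As it stands, your proposal identifies the right objects (folding, the involution $\tau$, the unfolded surface) but supplies neither the decisive group-theoretic fact behind Condition~\ref{condition} nor the argument for irrational base angles.
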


Continuing in the tradition of Hooper and Schwartz, we interpret this result as a statement about orbit tiles.
As usual, we regard the set of triangles as the set of pairs $(\theta_1,\theta_2)$ with $0<\theta_1,\theta_2<\frac{\pi}{2},$
which are interpreted as triangles with angles $\theta_1,\theta_2$ and $\pi-\theta_1-\theta_2$.

\begin{cor}\label{corollary}
Any orbit tile which contains an isosceles triangle satisfying Condition ~\ref{condition} is symmetric across the line of isosceles triangles.
\end{cor}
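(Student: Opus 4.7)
The plan is to reduce the symmetry of the tile to a single equivalence of combinatorial types at the isosceles triangle $T$, obtained by combining Theorem~\ref{main theorem} with the observation that reflection across the axis of symmetry is an isometry of $T$ whose effect is erased by the folding map to $T'$.

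Let $T$ be an isosceles triangle in the orbit tile satisfying Condition~\ref{condition}, and let $f$ be a periodic trajectory on $T$ of combinatorial type $C$. Write $R$ for the reflection of $T$ across its axis of symmetry, and set $\tilde f = R\circ f$; this is again a periodic trajectory on $T$, whose combinatorial type equals $\tau(C)$, where $\tau$ is the permutation of edge labels induced by $R$ (swapping the two legs and fixing the base). Because the right triangle $T'$ is obtained precisely by identifying each point of $T$ with its image under $R$, the folding map $\pi\colon T\to T'$ satisfies $\pi\circ R=\pi$, so the folded trajectories $f'=\pi\circ f$ and $\tilde f' = \pi\circ\tilde f$ coincide as trajectories on $T'$; in particular $f'\sim\tilde f'$. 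Applying Theorem~\ref{main theorem} with $g=\tilde f$ then yields $f\sim\tilde f$, i.e.\ $\tau(C)\sim C$.

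Now let $(\theta_1,\theta_2)$ be any triangle in $\mc O(C)$, and let $h$ be a periodic trajectory on it of type $C$. The planar reflection that carries this triangle to the triangle $(\theta_2,\theta_1)$ sends $h$ to a periodic trajectory on $(\theta_2,\theta_1)$; and under the convention that labels each edge by its opposite angle, the combinatorial type of this reflected trajectory is again $\tau(C)$. Since $\tau(C)\sim C$, the triangle $(\theta_2,\theta_1)$ also admits a trajectory of type $C$ and therefore lies in $\mc O(C)$, which is the desired symmetry across the line $\theta_1=\theta_2$.

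The main bookkeeping to verify is that the permutation $\tau$ obtained by reflecting the symmetric isosceles triangle $T$ agrees, under a single chosen labeling convention, with the permutation induced by the parameter-space reflection $(\theta_1,\theta_2)\mapsto(\theta_2,\theta_1)$ on a generic triangle. Once edges are labeled by their opposite angles, both reflections transpose the two labels $1$ and $2$ and fix the third, so this compatibility is automatic and the two invocations of $\tau$ refer to the same permutation, completing the argument.
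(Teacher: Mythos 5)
Your proposal is correct and follows essentially the same route as the paper: reflect the trajectory across the axis of symmetry of $T$, note that the folded trajectories on $T'$ coincide so Theorem~\ref{main theorem} gives $f\sim \tilde f$, and then use the fact that reflecting a trajectory corresponds to reflecting its orbit tile across the line of isosceles triangles. Your write-up merely makes explicit the labeling bookkeeping that the paper dispatches with the phrase ``the roles of the two congruent edges are reversed.''
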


For isosceles triangles not satisfying Condition ~\ref{condition}, Theorem ~\ref{main theorem} fails drastically.
In fact, we prove:

\begin{thm}\label{converse}
Let $T$ be an isosceles triangle with base angle $\frac{a}{b}\pi$, $b$ odd, $x\in T$ a point in the base other than its center,
$\theta\in S^1$ and $\mathrm{Orb}(x,\theta)$ the trajectory originating from $x$ with direction $\theta$.
Then the set of $\theta$ such $\mathrm{Orb}(x,\theta)$ is periodic and satisfies
Theorem ~\ref{main theorem} is nowhere dense.
\end{thm}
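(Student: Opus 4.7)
The strategy is to use the reflection $\tau:T\to T$ across the axis of symmetry of $T$ as a systematic source of counterexamples to Theorem \ref{main theorem}. Since the quotient $T\to T'$ is exactly the quotient by $\tau$, we have $(\tau f)'=f'$ for every trajectory $f$ on $T$. Hence whenever $f=\mathrm{Orb}(x,\theta)$ is periodic with $\tau f\not\sim f$, setting $g=\tau f$ produces a periodic trajectory with $g'\sim f'$ but $g\not\sim f$, so $f$ fails the conclusion of Theorem \ref{main theorem}. It therefore suffices to show that
\[
S:=\{\theta\in S^{1}:\mathrm{Orb}(x,\theta)\text{ is periodic and }\tau\mathrm{Orb}(x,\theta)\sim\mathrm{Orb}(x,\theta)\}
\]
is nowhere dense in $S^{1}$.

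To analyze $S$ I would pass to the Zemlyakov--Katok translation surface $X$ of $T$: because $T$ has rational angles with common denominator $b$, $X$ is built from $2b$ isometric copies of $T$ glued along their edges, and $\tau$ lifts to an orientation-reversing involution $\tilde\tau$ of $X$. Periodic trajectories on $T$ correspond to closed geodesics on $X$; in each periodic direction $X$ decomposes into finitely many parallel maximal flat cylinders; and two periodic trajectories on $T$ share a combinatorial type if and only if their lifts lie in a single cylinder (up to cyclic rotation and reversal around the cylinder). Under this dictionary, $\tau f\sim f$ becomes the condition that the cylinder of $X$ containing the lift of $(x,\theta)$ and the cylinder containing the lift of $(\tau x,\tilde\tau\theta)$ carry equivalent combinatorial data. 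Since periodic directions are dense by Masur's theorem, this is a non-trivial constraint on $\theta$.

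To show that $S$ is nowhere dense I would fix an arbitrary open arc $I\subset S^{1}$ and exhibit an open sub-arc $J\subset I$ in which no periodic $\theta$ lies in $S$. The key structural input, specific to $b$ odd, is that $\tilde\tau$ does not stabilize individual cylinders in generic periodic directions but instead swaps cylinders of inequivalent combinatorial type; this reflects the failure of Condition \ref{condition} and stems from the fact that no copy of $T$ in $X$ is individually fixed by $\tilde\tau$ when $b$ is odd, whereas for $b$ even such a ``fixed axial copy'' exists and forces the cylinder-preserving behavior required by Theorem \ref{main theorem}. Using $x\neq\tau x$ (which holds because $x$ is in the base but not at its center) and the fact that the lifts of $x$ and $\tau x$ to $X$ lie in distinct copies of $T$, one can arrange that the cylinders through the two lifts are $\tilde\tau$-swapped for $\theta$ ranging over an open sub-arc $J\subset I$. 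Every periodic $\theta\in J$ then satisfies $\tau\mathrm{Orb}(x,\theta)\not\sim\mathrm{Orb}(x,\theta)$, so $J\cap S=\emptyset$ and $S$ is nowhere dense.

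The main obstacle is making the ``$\tilde\tau$-swapped cylinders'' claim uniform over an \emph{open} sub-arc $J\subset I$ rather than at isolated $\theta$: the cylinder decomposition, and thus its $\tilde\tau$-pairing, is locally constant as a function of $\theta$ only on each connected component of the complement of the finitely many directions supporting saddle connections, so the argument must reduce to verifying the $\tilde\tau$-swap behavior on such a single stability component. This in turn requires tracking how $\tilde\tau$ permutes the $2b$ copies of $T$ comprising $X$ and translating the parity $b$ odd into an explicit obstruction to the existence of a $\tilde\tau$-fixed cylinder through the lift of $x$.
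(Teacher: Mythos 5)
Your opening reduction is exactly the paper's: take $g=r_2\circ f$ (your $\tau f$), note $g'=f'$, and conclude that it suffices to show the set $S=\{\theta:\mathrm{Orb}(x,\theta)\text{ periodic and }r_2\circ f\sim f\}$ is nowhere dense. The problem is that everything after that reduction — which is the actual content of the theorem — is not carried out, and the route you sketch for it rests on a false structural premise. On the translation surface of a rational triangle the directions containing saddle connections are countably infinite and \emph{dense}, not finite; hence there are no open arcs of directions ("stability components") on which a cylinder decomposition exists, let alone on which it and its $\tilde\tau$-pairing are locally constant. So the plan to verify the "$\tilde\tau$-swapped cylinders" claim uniformly over an open sub-arc $J$ cannot work as stated: each periodic direction in $J$ carries its own decomposition, and between periodic directions there is no decomposition at all. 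Moreover, even at a single periodic direction, "$\tilde\tau$ swaps the two cylinders" does not by itself yield $r_2\circ f\not\sim f$: combinatorial equivalence permits reversal and the $G(T)$-action on $R(T)$ (see the remark following Lemma~\ref{same combinatorial type}), so one must rule out an identification of the two cylinders through that equivalence, which your sketch does not address.

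For comparison, the paper closes this gap with a concrete quantitative argument that your proposal never reaches. If $f\sim r_2\circ f$, Lemma~\ref{same combinatorial type} gives $\alpha\in G(T)$ with $f$ and $\alpha r_2\circ f$ in one cylinder up to reversal, so the direction $\theta$ must be a $\pm1$-eigendirection of the linear map $\alpha r_2$; since $r_2\notin G(T)$ when $b$ is odd (Lemma~\ref{biholomorphism}), $\alpha r_2\neq\mathrm{id}$, so either $\theta$ lies in a finite set of exceptional directions, or $\alpha r_2=-I$, in which case the cylinder contains the base segment joining $x$ and $r_2x$, hence has width at least $|x-r_2x|>0$ and contains the center $0$ of the base. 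Lemma~\ref{vertices} then produces, inside any given arc of directions, a direction $\phi$ along which the trajectory from $0$ meets a vertex; for $\theta$ near $\phi$ a cylinder of that definite width through $0$ would be forced to contain the vertex, a contradiction, and this is what yields an open sub-arc disjoint from $S$. Note in particular that the hypothesis that $x$ is not the center enters precisely as the width bound $|x-r_2x|>0$, not (as in your sketch) merely to say the lifts of $x$ and $r_2x$ sit in different copies of $T$. Without an argument of this kind — a direction rigidity statement plus a mechanism converting it into open arcs free of $S$ — the nowhere-density claim remains unproved.
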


By a well-known result of Masur in \cite{RatCase}, the set of $\theta$ such that the trajectory originating from $x$ with direction
$\theta$ is periodic is dense. Thus Theorem ~\ref{converse} provides a converse to Theorem ~\ref{main theorem}. Because
Theorem ~\ref{main theorem} holds on a dense set of isosceles triangles, Theorem ~\ref{converse} can be interpreted as a result
on stable trajectories.

\begin{cor}\label{unstable}
Let $T$ be an isosceles triangle with base angle $\frac{a}{b}\pi$, $b$ odd, and $x\in T$ a point in the base other than its center.
Then the set of $\theta$ such that $\mathrm{Orb}(x,\theta)$ is periodic and stable is nowhere dense.
\end{cor}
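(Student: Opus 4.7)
The plan is to deduce Corollary~\ref{unstable} from Theorem~\ref{converse} by establishing the implication: if $f=\mathrm{Orb}(x,\theta)$ is periodic and stable on $T$, then $f$ satisfies the conclusion of Theorem~\ref{main theorem}, namely that every periodic trajectory $g$ on $T$ with $g'\sim f'$ also satisfies $g\sim f$. Once this implication is in hand, the set of $\theta$ for which $\mathrm{Orb}(x,\theta)$ is periodic and stable is contained in the nowhere-dense set identified by Theorem~\ref{converse}, and the corollary follows.

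To prove the implication, suppose $f$ has combinatorial type $C$ and is stable, and let $g$ on $T$ be periodic of type $D$ with $g'\sim f'$; write $C'$ for their common projection type on $T'$. Since $\mathcal{O}(C)$ is open and contains $T$, its intersection with the line of isosceles triangles is a relative neighborhood of $T$ in that line. Choosing a sequence of base angles $\alpha_n$ that are irrational multiples of $\pi$ and converge to the base angle of $T$ produces isosceles triangles $T_n\to T$ each satisfying Condition~\ref{condition}, with $T_n\in\mathcal{O}(C)$ for all large $n$. Each $T_n$ then admits a periodic trajectory $f_n$ of combinatorial type $C$ whose projection $f'_n$ on $T'_n$ has combinatorial type $C'$, the folding rule being purely combinatorial and hence independent of the specific geometry of $T_n$. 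If one can further exhibit, for some such $n$, a periodic trajectory $g_n$ on $T_n$ of combinatorial type $D$, then $g'_n$ has type $C'$ as well, and Theorem~\ref{main theorem} applied on $T_n$ forces $g_n\sim f_n$, so $D=C$, completing the proof.

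The chief obstacle is this persistence of $g$: because $g$ is not assumed stable, $\mathcal{O}(D)$ need not be open, and it is not automatic that $T_n\in\mathcal{O}(D)$. My intended approach is to exploit the structural interpretation of $f$ and $g$ as two distinct lifts, on the translation surface of $T$, of a single closed geodesic on the translation surface of $T'$---an ambiguity made possible by the failure of Condition~\ref{condition} for $T$. Because the type-$C'$ closed geodesic persists on $T'_n$ (witnessed by $f'_n$), and because $T_n$ satisfies Condition~\ref{condition} and so admits only one lift of that geodesic up to equivalence, either the counterpart of $g$ must also persist as the required $g_n$ (yielding $D=C$ by the argument above), or the collapse of the lifting ambiguity under the deformation $T\rightsquigarrow T_n$ contradicts the openness of $\mathcal{O}(C)$ near $T$. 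Making the dichotomy quantitative, via a careful cylinder-width analysis on the translation surfaces involved, is the one step I expect to require genuine work.
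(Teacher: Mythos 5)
Your overall plan---reduce Corollary~\ref{unstable} to Theorem~\ref{converse} by showing that stability forces the conclusion of Theorem~\ref{main theorem}---is the same as the paper's, but your execution stops exactly at the step you flag, and that is a genuine gap rather than a routine verification. You try to transfer the \emph{full} conclusion of Theorem~\ref{main theorem}, quantified over every periodic $g$ with $g'\sim f'$, to a nearby isosceles triangle $T_n$ satisfying Condition~\ref{condition}; since such a $g$ need not be stable, nothing guarantees $T_n\in\mathcal O(D)$, and your proposed dichotomy (``either the counterpart of $g$ persists or the collapse of the lifting ambiguity contradicts openness'') names no mechanism by which failure of persistence of $D$ would contradict openness of $\mathcal O(C)$; as written it is not an argument. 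A secondary problem is your parenthetical claim that the folding rule is ``purely combinatorial and hence independent of the specific geometry of $T_n$'': the folded trajectory strikes the leg of $T'$ once for each crossing of the axis of symmetry, and the crossing pattern is not determined by the combinatorial type on $T$ alone---it can change as the triangle deforms inside $\mathcal O(C)$ (at the transition the folded trajectory passes through the foot of the altitude, a vertex of $T'$, while the trajectory on $T$ survives). So even the claim that $f_n'$ still has type $C'$ needs care.

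The repair is to notice you are proving more than is needed, which is in substance what the paper's two-line proof means by ``Theorem~\ref{main theorem} is a statement about combinatorial types, which are constant on orbit tiles.'' The proof of Theorem~\ref{converse} uses only one consequence of ``$f$ satisfies Theorem~\ref{main theorem}'', namely $f\sim r_2\circ f$, where $r_2\circ f$ is the reflection of $f$ across the axis of symmetry of $T$; and for this particular $g$ persistence is free. Indeed, the type of the reflected trajectory is obtained from $C$ by exchanging the labels of the two congruent edges, so its orbit tile is the reflection of $\mathcal O(C)$ across the line of isosceles triangles (as in the proof of Corollary~\ref{corollary}). Since $\mathcal O(C)$ is open and contains $T$, it contains an isosceles $T_1$ with irrational base angle; let $f_1$ realize $C$ on $T_1$ and let $g_1$ be its literal reflection across the axis of symmetry of $T_1$. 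Then $f_1'=g_1'$ on $T_1'$, so Theorem~\ref{main theorem} applies at $T_1$ and gives $f_1\sim g_1$, i.e.\ $C$ equals its label-swapped type up to reversal; back on $T$ this says exactly $f\sim r_2\circ f$. The argument in the proof of Theorem~\ref{converse} shows that the set of $\theta$ with $\mathrm{Orb}(x,\theta)$ periodic and $f\sim r_2\circ f$ is nowhere dense, and the stable periodic directions form a subset of it, which finishes the corollary without ever needing your problematic persistence of an arbitrary $D$.
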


\section{Preliminaries}

In this section we work with arbitrary convex polygons rather than restricting our attention to triangles.
Let $P\subset \mathbb C$ be a convex $n$-gon with vertices $v_i$, labeled counter-clockwise.
Let $e_i$ denote the edge from $v_i$ to $v_{i+1}$, with addition interpreted mod $n$.

\begin{defns}
A \emph{periodic trajectory} is a function $f:[0,1]\to P$ with constant unit derivative, except at the edges of $P$ where the derivative
is reflected across the edge.
A \emph{cylinder of periodic trajectories} is a continuous function $g:(0,1)\times [0,1] \to P$ such that fixing the first coordinate
 of $g$ for any $s\in (0,1)$ gives a periodic trajectory $g_s$, and such that $\left.\frac{dg_s}{dt}\right|_{0}$ is independent of $s$.
\end{defns}

Recall that periodic trajectories give rise to lines in the \emph{unfolding} of $P$, where instead of reflecting the billiard ball off
an edge, we relfect the polygon across the edge, as in Figure 2.1.

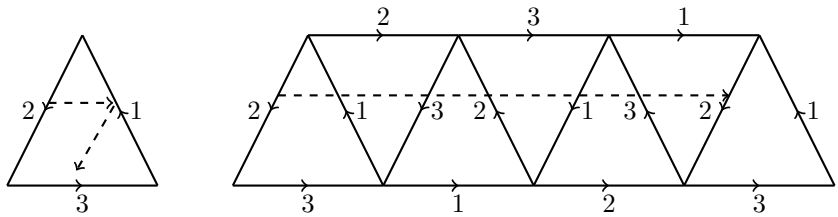
\begin{figure}
\begin{tikzpicture}[scale=2]
\begin{scope}[ 
    every path/.style={
        thick,
        postaction={nomorepostaction,decorate,
                    decoration={markings,mark=at position 0.5 with {\arrow{>}}}
                   }
        }
    ]
\draw (-3+-.5,-.5) -- (-3+.5,-.5) node[midway, below] {3};
\draw (-3+.5,-.5) -- (-3+0,.5) node[midway, right] {1};
\draw (-3+0,.5) -- (-3+-.5,-.5) node[midway, left] {2};
\draw (-2,-.5) -- (-1,-.5) node[midway, below] {3};
\draw (-1,-.5) -- (-1.5,.5)  node[midway, right] {1};
\draw (-1.5,.5) -- (-2,-.5) node[midway, left] {2};
\draw (-1.5,.5) -- (-.5,.5) node[midway, above] {2};
\draw (-.5,.5) -- (-1,-.5) node[midway, right] {3};
\draw (-1,-.5) -- (0,-.5) node[midway, below] {1};
\draw (0,-.5) -- (-.5,.5) node[midway, left] {2};
\draw (-.5,.5) -- (.5,.5) node[midway, above] {3};
\draw (.5,.5) -- (0,-.5) node[midway, right] {1};
\draw (0,-.5) -- (1,-.5) node[midway, below] {2};
\draw (1,-.5) -- (.5,.5) node[midway, left] {3};
\draw (.5,.5) -- (1.5,.5) node[midway, above] {1};
\draw (1.5,.5) -- (1,-.5) node[midway, left] {2};
\draw (1,-.5) -- (2,-.5)  node[midway, below] {3};
\draw (2,-.5) -- (1.5,.5) node[midway, right] {1};
\end{scope}
\draw[dashed, thick, ->] (-3+-.225,.05) -- (-3+.205,.05);
\draw[dashed, thick, ->] (-3+.21,.03) -- (-3+-.04,-.4);
\draw[dashed, thick, ->] (-1.7,.1) -- (1.3,.1);
\end{tikzpicture}
\caption{On the left is a trajectory on an isosceles triangle. On the right is the unfolding along that trajectory.
The edges of the triangle are labeled an oriented.}
\end{figure}

We will make heavy use of the unfolding, particularly in the rational case where
it forms a compact Riemann surface (in fact, a translation surface). We develop the unfolding using the following notation.

\begin{notn}
Let $r_i$ denote the (real) linear part of the reflection map across $e_i$. Let $G(P)$ be the group of such maps.
\end{notn}

Recall that $G(P)$ is finite iff $P$ is rational, in which case we can make use of the following construction.
This construction is essentially identical to the one found in the survey by Masur and Tabachnikov \cite{FlatStruc}.

\begin{con}\label{construction}
We construct the Riemann surface $R(P)$ by gluing together the polygons $\alpha P$ for $\alpha\in G(P)$.
We glue $\alpha P$ and $r_i\alpha P$ along $e_i$ such that $v_i\in \alpha P$ is identified with $v_i\in r_i\alpha P$
and $v_{i+1}\in \alpha P$ with $v_{i+1}\in r_i\alpha P$. The inclusion maps $\vp_\alpha:\alpha P\hookrightarrow R(P)$ give
us local parametrizations near every point in the interior of $\alpha P$. For points in the edge $\vp_\alpha(\alpha e_i)$, we define
local parametrizations via the inclusion map $\alpha P\cup r_i\alpha P\hookrightarrow R(P)$. 
At the vertex $\vp_\alpha(\alpha v_i)$ the polygons
$\alpha P, r_{i-1}\alpha P, r_i r_{i-1}\alpha P,r_{i-1}r_ir_{i-1}\alpha P,\ldots, (r_ir_{i-1})^m\alpha P = \alpha P$ meet.
We define a local parametrization near $\vp_\alpha(\alpha v_i)$ by
\[\vp(z) = \begin{cases}
\vp_\alpha(z^k) &\text{if } 0\leq \Arg(z)\leq \frac{\pi}{m}\\
\vp_{r_{i-1}\alpha}(z^k) &\text{if } \frac{\pi}{m}\leq \Arg(z)\leq \cdot\frac{2\pi}{m}\\
\quad\vdots&\quad\quad\quad\quad\ \vdots\\
\vp_{r_{i-1}(r_ir_{i-1})^{m-1}\alpha}(z^k) &\text{if } \frac{(2m-1)\pi}{m}\leq \Arg(z)\leq \cdot\frac{2m\pi}{m}
\end{cases}\]
where $\phi=\frac{k}{m}\pi$ is the angle between $e_{i-1}$ and $e_i$. Together these maps form an atlas for $R(P)$.

The standard $1$-form $\dz$ on $\C$ induces a holomorphic $1$-form $\dz_P$ via the inclusions $\vp_\alpha$. It is easy to see that
this is nonzero except at the vertices $\vp_\alpha(\alpha v_i)$, where it has a zero of order $k-1$.

We have a natural action of $G(P)$ on $R(P)$ where $\beta\in G(P)$ sends $\vp_\alpha(\alpha P)$ to $\vp_{\beta\alpha}(\beta\alpha P)$.
\end{con}

Illustrations of Construction ~\ref{construction} can be found in Figures 3.2 and 3.3.
While it is common to square $\dz_P$ to produce a quadratic differential, we shall restrict our attention to the $1$-form.

\begin{defns}
Trajectories on $R(P)$ are paths $f$ which do not contain any vertex such that $\dz_P\circ \d f = e^{i\theta}$ for some fixed $\theta,$
called the \emph{argument} of $f$. A cylinder of periodic trajectories is defined similarly.
\end{defns}

The requirement that trajectories not contain a vertex is crucial, because one way we show that two trajectories are not in the
same cylinder is by showing that any cylinder containing both trajectories contains a vertex. This is especially useful in light of
the following standard results, the first of which can be found in Section 3 of \cite{Billiards}.

\begin{lem}\label{vertices}
For any $x\in P$, the set of angles $\theta\in S^1$ such that $\mathrm{Orb}(x,\theta)$ 
comes arbitrarily close to vertices of $P$ has full measure.
\end{lem}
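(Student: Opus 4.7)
The condition ``$\mathrm{Orb}(x,\theta)$ comes arbitrarily close to vertices'' is the countable intersection over $n \in \mathbb{N}$ of the open conditions ``$\mathrm{Orb}(x,\theta) \cap V_{1/n} \neq \emptyset$'', where $V_\epsilon := \bigcup_v (B_\epsilon(v) \cap P)$. The plan is to reduce to showing, for each fixed $\epsilon > 0$, that $E_\epsilon^x := \{\theta \in S^1 : \mathrm{Orb}(x,\theta) \cap V_\epsilon \neq \emptyset\}$ has full measure in $S^1$ for every $x \in P$; intersecting over $\epsilon = 1/n$ then completes the proof.

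The main tool will be the Poincar\'e recurrence theorem applied to the billiard flow $\phi_t$ on the phase space $\Omega := P \times S^1$, which preserves Lebesgue measure $\mu$. Since $V_\epsilon \times S^1$ has positive $\mu$-measure, the forward-invariant ``bad set'' $Z_\epsilon := \{(y,\theta) \in \Omega : \phi_t(y,\theta) \notin V_\epsilon \times S^1 \text{ for all } t > 0\}$ is essentially fully invariant by measure preservation. The technical heart is to establish $\mu(Z_\epsilon) = 0$: in a convex polygonal billiard, trajectories confined to the smooth region $(P \setminus V_\epsilon) \times S^1$ must lie on a measure-zero union of lower-dimensional invariant subsystems---cylinders of periodic trajectories, invariant translation-surface components in the rational case, and their analogs. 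For this step I would follow the structural treatment in \cite{Billiards}.

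Granting $\mu(Z_\epsilon) = 0$, Fubini yields $|S^1 \setminus E_\epsilon^x| = 0$ for Lebesgue-a.e.\ $x \in P$. Promoting this to \emph{every} $x$ is the second delicate point: the set $E_\epsilon^x$ is open in $S^1$ for each $x$ (by continuity of $\phi_t$ in the initial condition away from singular directions), and $x \mapsto |E_\epsilon^x|$ is lower semi-continuous, so the set of ``good'' base points is open and dense in $P$. To cover the remaining exceptional points I would argue directly via the first-return map on $\partial P$ with its invariant measure $\sin\alpha\,d\ell\,d\alpha$: if $|E_\epsilon^{x_0}| < 2\pi$, the positive-measure set of directions from $x_0$ whose orbit avoids $V_\epsilon$ maps to a positive-measure transverse arc in $\partial P \times (0,\pi)$ whose entire forward $T$-orbit lies in $Z_\epsilon$, contradicting $\mu(Z_\epsilon)=0$.

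The hardest step will be proving $\mu(Z_\epsilon) = 0$, since measure preservation alone is insufficient---polygonal billiards need not be ergodic, and ruling out invariant subsystems confined outside a neighborhood of every vertex requires the specific structure of convex polygonal dynamics rather than raw ergodic-theoretic input.
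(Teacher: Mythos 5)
The paper does not actually prove this lemma: it is quoted as a standard result, with a citation to Section~3 of \cite{Billiards}, so there is no internal argument to measure you against. Your outline ultimately does the same thing at its core --- the step you yourself identify as the technical heart, $\mu(Z_\epsilon)=0$, is not argued but deferred to ``the structural treatment in \cite{Billiards}'' (and the appeal to Poincar\'e recurrence does no work there, as you concede). Deferring that step is defensible, since it is exactly what the paper does; but then the only genuinely new content in your proposal is the promotion from ``almost every $x$'' to ``every $x$'', and that step as written is wrong.

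The flaw: from a single base point $x_0$, the set $\{x_0\}\times B_\epsilon$ of bad initial conditions (with $B_\epsilon\subset S^1$ of positive length) flows to a one-parameter family --- a curve --- in $\partial P\times(0,\pi)$, which has zero measure with respect to $\sin\alpha\,d\ell\,d\alpha$. A measure-zero subset of $Z_\epsilon$ contradicts nothing, so no contradiction with $\mu(Z_\epsilon)=0$ is obtained; ``positive-measure transverse arc'' conflates one-dimensional measure on the arc with the two-dimensional invariant measure. The missing idea is to thicken in the \emph{spatial} direction rather than hoping the directional fan acquires measure: pass to the unfolding and observe that if $\mathrm{Orb}(x_0,\theta)$ stays at distance $\ge\epsilon$ from every vertex, then for any $y\in P$ with $|y-x_0|<\epsilon/2$ the parallel ray from $y$ in direction $\theta$ has the same itinerary forever (a vertex lying in the strip between the two parallel rays would be within $\epsilon/2$ of the first ray), and hence $\mathrm{Orb}(y,\theta)$ avoids the $\epsilon/2$-neighborhood of the vertices. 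Thus $\bigl(B(x_0,\epsilon/2)\cap P\bigr)\times B_\epsilon$ is a positive-measure subset of phase space contained in $Z_{\epsilon/2}$, which is the contradiction you want. With that replacement (and the rest of your reduction over $\epsilon=1/n$, which is fine), the outline becomes a correct derivation of the pointwise-in-$x$ statement from the almost-everywhere phase-space statement of \cite{Billiards}.
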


Clearly any two periodic trajectories in the same cylinder will have the same combinatorial type. In fact, the converse
of this holds as well. The proof of this fact is obvious to those familiar with billiards, but is included here for completeness.

\begin{lem}\label{same combinatorial type}
If $f\sim g$, then up to reversal $f$ and $g$ lie in the same cylinder of periodic trajectories.
\end{lem}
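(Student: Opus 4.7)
My plan is to use the unfolding procedure: we show that both trajectories lift to parallel line segments in a common unfolded chain of reflected copies of $P$, and then exhibit a cylinder as the family of parallel segments in the strip between them. After reversing $g$ if necessary, we may assume $f$ and $g$ share the combinatorial type $\sigma_1\sigma_2\cdots\sigma_k$. Set $P_0 = P$ and inductively let $P_i$ be obtained by reflecting $P_{i-1}$ across the edge labelled $\sigma_i$, so that $P_0, P_1, \ldots, P_k$ is a chain of isometric copies of $P$ glued along consecutive edges, and $f$ and $g$ lift to straight line segments $\ell_f, \ell_g$ through this chain.

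Letting $\Phi: P_k \to P_0$ denote the isometry given by the composition of the $k$ reflections, periodicity requires the endpoint of each lift in $P_k$ to be the $\Phi$-preimage of its starting point, and the linear part of $\Phi$ to fix the direction of motion. When $k$ is even, this linear part is orientation-preserving and fixes a direction, hence is the identity, so $\Phi$ is a translation by some vector $v$ and both $\ell_f, \ell_g$ have direction $v/|v|$ and length $|v|$. When $k$ is odd, $\Phi$ is a glide reflection and the direction of motion must be the unique axis direction of its linear part. Either way, $\ell_f$ and $\ell_g$ are parallel.

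Consider the set $S$ of line segments in the chain parallel to $\ell_f$ and crossing each $\sigma_i$ in order. Because each $\sigma_i$ is a segment in a convex polygon and the entry and exit points of parallel lines move monotonically in the perpendicular offset, each ``crosses $\sigma_i$'' condition cuts out an open interval of parallel offsets, so $S$ is an intersection of open intervals---itself an open interval containing $\ell_f$ and $\ell_g$. Each line in $S$ folds back to a trajectory on $P$ of combinatorial type $\sigma_1\cdots\sigma_k$. In the even case, the translation identity $\Phi^{-1}(a) - a \equiv v$ makes periodicity automatic at every starting point $a$, so $S$ gives a genuine two-parameter cylinder containing $f$ and $g$. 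In the odd case, periodicity pins the starting point to the glide axis of $\Phi$, and the resulting cylinder is a one-parameter family of reparametrizations of a single closed orbit---these share the common initial direction required by the definition of a cylinder.

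The main obstacle is reconciling the convexity argument, which furnishes many parallel lines with the correct combinatorial prefix, with the periodicity condition, which singles out which of these lines close up. In the even case the translation structure of $\Phi$ makes this compatibility automatic; in the odd case one must verify that $f(0)$ and $g(0)$ lie on the glide axis of $\Phi$ (which they do, by their own periodicity) and that the set of valid starting points along the axis is connected inside $S$, so that $f$ and $g$ are connected by a continuous family of periodic trajectories.
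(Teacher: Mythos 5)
Your proposal is correct and follows essentially the same route as the paper: unfold along the common combinatorial type, observe the two trajectories are parallel there, and interpolate by parallel translation, with the only possible obstruction (a line through a vertex, i.e.\ leaving the open unfolded edges) ruled out. You simply make explicit what the paper asserts briefly---the holonomy argument (translation in the even case, glide reflection in the odd, degenerate case) showing the directions agree and the interpolated trajectories close up, and the open-interval-of-offsets argument replacing the paper's ``first time a vertex is hit'' contradiction.
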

\begin{proof}
Up to reversal, $f$ and $g$ strike the same sequence of edges.
They also must have the same initial angle, as otherwise by considering the unfolding it is clear that they would
eventually strike different edges. Let $h:[0,1]\times [0,1]\to P$ be the cylinder obtained by translating from $f$ to $g$. Clearly $h_s(1)=h_s(0)$ for all $s\in [0,1]$,
so the only possible obstruction to this being extendable to a cylinder of periodic trajectories is that its image contains a vertex of $P$. But then we would have some smallest $t$
such that there exists some $s\in [0,1]$ for which $h_s(t)$ is a vertex of $P$, and thus at this point $f(t)=h_0(t)$ and $g(t)=h_1(t)$ lie on different edges, contradicting the
fact that they strike the same sequence of edges.
\end{proof}

\begin{rem}
Since the copies of $P$ in $R(P)$ are indistinguishable from the edge labeling, if $f$ and $g$ are regarded as trajectories on $R(P)$ then in order for Lemma ~\ref{same combinatorial type}
to hold we must allow an action of $G(P)$. Similarly, it was necessary to allow reversal in the definition of combinatorial type since there is no canonical choice of
orientation for $R(P)$ (as some elements of $G(P)$ are orientation-reversing).
\end{rem}

The following lemmas provide a useful characterization of orbit tiles.
A version of this lemma for triangles is proven in Section 2.5 of \cite{Deg100}; however that version is more suited for computations
and less useful for our purposes.

\begin{lem}\label{tiles lemma}
Let $\mathcal P$ denote the set of polygons $P$ with any of the standard metrics.
Let $j_1,\cdots, j_k\in \{1,2,\ldots,n\}$ with $j_1=j_k$ and let $\theta_i$ denote the angles of $P$. Then there exist
\begin{itemize}
\item a continuous, analytic a.e. function $\Omega: \mathcal P\times (0,1)\times S^1\to \mathbb R$ such that $\mathrm{Orb}(P,xv_{j_1}+(1-x)v_{j_1+1},\theta)$
 strikes $e_{j_1},\ldots,e_{j_k}$ in order iff $\Omega(P,x,\theta)>0$,
\item an analytic function $D:\mathcal P\to S^1$ such that, assuming $\mathrm{Orb}(P,xv_{j_1}+(1-x)v_{j_1+1},\theta)$ strikes $e_{j_1},\ldots,e_{j_k}$,
 it is periodic iff $D(P)=\theta,$ and
\item if $k$ is even, a linear function $\Theta(\theta_1,\ldots,\theta_n)$ with even integer coefficients which gives the change in direction of any trajectory striking $e_{j_1},\ldots,e_{j_k}$.
\end{itemize}
\end{lem}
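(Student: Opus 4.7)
The plan is to set up the unfolding along the prescribed sequence and translate each of the three items into a geometric statement about this planar picture. I would build inductively $P = P^{(0)}, P^{(1)}, \ldots, P^{(k-1)}$, with $P^{(i)}$ the reflection of $P^{(i-1)}$ across the edge of $P^{(i-1)}$ corresponding to $e_{j_{i+1}}$. Each reflection is across a line whose equation is real-analytic in the preceding vertex coordinates, so every vertex of every $P^{(i)}$ depends analytically on $(P, \theta_1, \ldots, \theta_n) \in \mathcal{P}$. The trajectory in question corresponds to the ray $r(t) = p(x) + te^{i\theta}$ with $p(x) = xv_{j_1} + (1-x)v_{j_1+1}$ traversing these polygons in succession.

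For $\Omega$, I would encode the condition ``strikes $e_{j_1}, \ldots, e_{j_k}$ in order'' as the conjunction of finitely many open constraints: for each $i$, the ray must cross the image of $e_{j_{i+1}}$ in $P^{(i)}$ at an interior point and must not first cross any other edge of $P^{(i)}$. Each such constraint is the positivity of an explicit signed-distance function, analytic in $(P, x, \theta)$; setting $\Omega$ to be their minimum produces a continuous function, analytic off the codimension-one locus where two of its constituents tie. For $D$, assuming $\Omega > 0$, periodicity is equivalent to the isometry $T$ of the plane---the composition of the $k-1$ unfolding reflections---sending the exit point on the image of $e_{j_k}$ back to $p(x)$ while fixing $e^{i\theta}$. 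A direct case analysis on the linear part of $T$ (rotation versus reflection) produces a unique analytic direction $D(P)$ characterized by the iff, and this direction is independent of $x$ because altering $x$ merely translates $r$ parallel to $e_{j_1}$, an operation commuting with the above conditions.

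For $\Theta$ (the case $k$ even), the direction change between the initial ray and the ray just before $e_{j_k}$ is produced by the $k-2$ intermediate reflections at $e_{j_2}, \ldots, e_{j_{k-1}}$, an even number, so the linear composition is a rotation by $2\sum_{i=1}^{(k-2)/2}(\alpha_{2i} - \alpha_{2i-1})$, where $\alpha_j$ is the slope of the $j$-th reflecting edge in the unfolded picture. Each $\alpha_j$ is an integer combination of $\theta_1, \ldots, \theta_n$ readable from the cumulative unfolding, so the outer factor of two yields the claimed even integer coefficients. The main obstacle I expect is the bookkeeping behind $\Omega$: writing down the finite list of signed-distance constraints so that their conjunction is genuinely equivalent to the combinatorial condition, and ensuring that no extra inequality is needed to prevent the ray from grazing a vertex or re-entering a previously visited region. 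This work is careful but elementary; nothing beyond the unfolding itself is deep.
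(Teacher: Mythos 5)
Your proposal is correct and follows essentially the same route as the paper: unfold along the prescribed edge sequence, use analyticity of the unfolded vertices to build $\Omega$ as a minimum of signed linear constraints saying each crossing point lies between the endpoints of the corresponding unfolded edge, obtain $D$ from the closing-up direction of the unfolded segment, and get $\Theta$ by composing pairs of reflections into rotations by twice angles of the polygon. The minor differences (your explicit ``does not first cross another edge'' constraints, which the paper absorbs via convexity, and your count of $k-2$ intermediate reflections versus the paper's pairing of all listed strikes) do not change the argument, and your treatment of $D$ via the unfolding isometry is, if anything, slightly more careful than the paper's.
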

\begin{proof}
For $\Omega$, consider the unfolding of $P$ along the edges $e_{j_1},\ldots,e_{j_k}$. Note that for each $1\le i<k$, any trajectory which passes through $e_{j_1},\ldots,e_{j_i}$ passes
 through $e_{j_{i+1}}$ iff it passes between $v_{j_{i+1}}$ and $v_{j_{i+1}+1}$.
 The position of each vertex and the slope of each edge is an analytic function of $P$. 
 Thus the intersection of $\mathrm{Orb}(xv_{j_1}+(1-x)v_{j_1+1},\theta)$ with each edge $e_{j_i}$ is an analytic function of $P,x$ and $\theta$.
 Call these functions $h_i(P,x,\theta)$. Let $p_i:\R^2\to \R$ be the functional which takes the component of a vector in the direction of $e_{j_i}$. 
 We have shown that $\mathrm{Orb}(xv_{j_1}+(1-x)v_{j_1+1},\theta)$ is as desired iff $p_i(v_{j_{i}})<p_i(h_i(P,x,\theta))$ and $p_i(v_{j_{i}+1})>p_i(h_i(P,x,\theta))$ for all $i$. Thus if we let 
 \[\Omega(P,x,\theta)=\min\left(\min\{p_i(h_{i}(P,x,\theta)-v_{j_{i}})\},\max\{p_i(v_{j_{i}+1}-h_{i}(P,x,\theta))\}\right)\]
 we get the desired function.
 
For $D$, note that in order for the trajectory to return to its starting point, it must be parallel to the line connecting the first and last vertex described above. The direction of this line is easily seen
 to be an analytic function. 
 
For $\Theta$, note that upon striking an the edge $e_{j_i}$ and then $e_{j_{i+1}}$, the angle of a trajectory has been rotated by twice the angle between them, which is 
one of the $\theta_j$. Since $k$ is even, this gives us the desired function.
\end{proof}

For periodic trajectories, we can always assume $k$ is even as we can have the trajectory repeat.
We will use an easy but important fact for triangles that follows from Lemma ~\ref{tiles lemma}. This is stated in Section 2.1 of \cite{Deg100}, but not proven.
A less precise version is given by Lemma 7.1 in ~\cite{Deg1001}.

\begin{cor}\label{tiles classification}
An orbit tile of triangles is either an open set or an open subset of a line (in the subspace topology).
\end{cor}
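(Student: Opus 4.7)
The plan is to realize the orbit tile as $V\cap U$ using the data of Lemma \ref{tiles lemma}, show $U$ is open, and analyze $V$ using the affine-linear structure of $\Theta$ on the parameter space of triangles.

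First, given a combinatorial type $C = j_1 \cdots j_k$ for a triangle, I would replace $C$ by $CC$ if $k$ is odd, so that Lemma \ref{tiles lemma} applies with $k$ even; this does not alter the orbit tile. The lemma then furnishes continuous $\Omega$, analytic $D$, and the linear $\Theta$. A triangle $P$ lies in $\mathcal O(C)$ precisely when there exist $x\in(0,1)$ and $\theta\in S^1$ satisfying $\Omega(P,x,\theta)>0$ (striking the right edges in order), $\theta=D(P)$ (spatial closure), and $\Theta(P)\in 2\pi\Z$ (directional closure). Substituting $\theta=D(P)$ gives
$$\mathcal O(C)=V\cap U,\quad V=\{P:\Theta(P)\in 2\pi\Z\},\quad U=\{P:\exists\, x\in(0,1),\ \Omega(P,x,D(P))>0\}.$$
By continuity of $\Omega$ and $D$, the defining subset of $\mathcal P\times(0,1)$ for $U$ is open, and $U$ is its projection to $\mathcal P$, hence itself open.

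Next, I would analyze $V$ using that $\Theta$ is linear with even integer coefficients in $(\theta_1,\theta_2,\theta_3)$. After substituting $\theta_3=\pi-\theta_1-\theta_2$, $\Theta$ becomes an affine function of $(\theta_1,\theta_2)$. If this affine function is identically some $2\pi m$, then $V=\mathcal P$ and $\mathcal O(C)=U$ is open, giving the first alternative of the corollary. If the affine function is constant but not in $2\pi\Z$, then $V$ and $\mathcal O(C)$ are empty. Otherwise $V$ is a locally finite union of parallel lines, and $\mathcal O(C)\subseteq V$ has empty interior in $\R^2$.

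The main obstacle is confining $\mathcal O(C)$ to a single one of these parallel lines. My plan is to note that $\Theta$ is continuous and takes values in the discrete set $2\pi\Z$ on $\mathcal O(C)$, hence is locally constant there, placing each connected component of $\mathcal O(C)$ on exactly one line $L\subseteq V$. I would then show $\mathcal O(C)$ is connected by propagating a cylinder of periodic trajectories as $P$ varies in $\mathcal O(C)$, extending the cylinder construction from the proof of Lemma \ref{same combinatorial type} to the parameter direction and using that this deformation avoids vertex collisions as long as we stay in $\mathcal O(C)$. Granted this, $\mathcal O(C)=L\cap U$ is the intersection of a line with an open set and hence open in $L$ in the subspace topology, yielding the second alternative.
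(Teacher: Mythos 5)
Your decomposition $\mathcal O(C)=V\cap U$ is exactly the paper's argument: after doubling the type so that $k$ is even, the paper writes $\mathcal O(C)=\ker\Theta\cap\bigcup_{x\in(0,1)}\Lambda_x^{-1}((0,\infty))$ with $\Lambda_x(T)=\Omega(T,x,D(T))$, and your openness argument for $U$ is the same continuity observation. Where you diverge is in reading the directional closing condition as $\Theta(P)\in 2\pi\Z$ rather than as the single equation $\Theta(P)=0$: the paper takes the latter, so its $V=\ker\Theta$ is automatically the whole space or one line and the proof ends there. Your reading is the more scrupulous one (the holonomy along the unfolded sequence need only be trivial as a rotation, and e.g.\ for Fagnano-type orbits the total turning is $2\pi$, not $0$), but it creates an extra obligation --- showing the tile meets only one of the parallel lines $\{\Theta=2\pi m\}$ --- which the paper's version of the proof never has to confront.

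That extra step is where your proposal has a genuine gap. Local constancy of $\Theta$ on $\mathcal O(C)$ only places each \emph{connected component} on a single line; to get down to one line you invoke connectedness of $\mathcal O(C)$, and the argument you sketch for it is circular: ``propagating a cylinder of periodic trajectories as $P$ varies in $\mathcal O(C)$'' presupposes a path inside $\mathcal O(C)$ joining two given triangles of the tile, which is precisely what connectedness would assert. Even granted such a path, there is no a priori continuous choice of cylinder along it: the cylinder realizing type $C$ at one parameter can degenerate onto a vertex while a different cylinder of the same type exists elsewhere in the triangle, so ``avoids vertex collisions as long as we stay in $\mathcal O(C)$'' is not justified by anything like Lemma~\ref{same combinatorial type}, which operates on a fixed triangle. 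Connectedness of orbit tiles is proved nowhere in the paper and is not an elementary fact you can simply cite. To close the argument you would need either to show that the integer $\Theta(P)/2\pi$ is determined by the combinatorial type alone, or to follow the paper in treating the directional condition as a single affine equation, or to weaken the conclusion to ``an open subset of a locally finite union of parallel lines''; as written, the confinement to a single line is unsupported.
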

\begin{proof}
Let $C$ be a combinatorial type, and note that $T\in \mathcal O(C)$ iff we have some $x\in (0,1)$ such that $\Omega(T,x,D(T))>0$ and $\Theta(T)=0$. Let
 $\Lambda_x(T)=\Omega(T,x,D(T))$. Thus 
 \[\mathcal O(C)=\ker \Theta\cap \left(\bigcup\limits_{x\in (0,1)}\Lambda_x^{-1}((0,\infty))\right)\]
and since $\Theta$ is affine of rank $0$ or $1$, $\ker\Theta$ can either be the entire space or a line. Since $\Lambda_x$ is continuous for all $x\in (0,1)$
 and $(0,\infty)$ is open, the union of the preimages is open and the result follows.
\end{proof}

Before proceeding to the proofs of our main theorems, we must point out two peculiarities of our definitions.

\begin{rem}
Some authors also consider combinatorial types equivalent if they differ by cyclic permutations, to allow for different parametrizations
of the trajectories.
We will also require that trajectories originate from the base of the triangle, which is justified by the fact that a periodic trajectory
must strike all three edges.
It should be clear from the proof of Theorem ~\ref{main theorem} that the theorem also holds with these two peculiarities removed.
However, we do not know whether Theorem ~\ref{converse} holds if we consider combinatorial types differing by cyclic permutations
equivalent. On the other hand, if we do not consider such combinatorial types equivalent, we must require that trajectories
originate on the base in order for Theorem ~\ref{main theorem} to hold. Fortunately these peculiarities
do not impact the corollaries.
\end{rem}


\section{Main Theorems}

From this point on, $T$ denotes an isosceles triangle and $T'$ the right triangle obtained by identifying the points in $T$ with their reflection across
 the line of symmetry. The labeling of vertices is shown in
 Figure ~\ref{labeling figure}.

We can reduce Theorem ~\ref{main theorem} to the rational case by the following lemma.

\begin{lem}\label{rational}
Any orbit tile containing an irrational isosceles triangle $T$ contains a rational isosceles triangle with base angle $\frac{a}{b}\pi$ where $b$ is even.
\end{lem}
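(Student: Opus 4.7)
The plan is to apply Corollary~\ref{tiles classification} to write $\mathcal O(C)$ as the intersection of an open set with a level set of the ``total-rotation'' function $\Theta$ from Lemma~\ref{tiles lemma}, then use irrationality of $T$'s base angle together with the even-integer coefficients of $\Theta$ to show that this level set automatically contains the whole line of isosceles triangles. A density argument on the isosceles line then produces a rational isosceles triangle with even-denominator base angle inside $\mathcal O(C)$.

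Concretely, let $f$ be a periodic trajectory on $T$ with combinatorial type $C$; doubling $C$ if necessary, we may assume its length is even, so Lemma~\ref{tiles lemma} gives a linear function $\Theta(\theta_1,\theta_2,\theta_3) = a'\theta_1 + b'\theta_2 + c'\theta_3$ with even integer coefficients whose value is the change in direction along $C$. Writing the angles of $T$ as $(\alpha,\alpha,\pi-2\alpha)$ with $\alpha/\pi$ irrational, periodicity of $f$ forces $\Theta(T) = 2m\pi$ for some integer $m$, i.e.\
\[ (a' + b' - 2c')\alpha + c'\pi = 2m\pi. \]
Irrationality of $\alpha/\pi$ together with integrality of the coefficients separately forces $a' + b' - 2c' = 0$ and $c' = 2m$. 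By the proof of Corollary~\ref{tiles classification}, $\mathcal O(C)$ is the intersection of an open neighborhood of $T$ with the solution set of $\Theta = 2m\pi$; after substituting $\theta_3 = \pi - \theta_1 - \theta_2$, this set is cut out by $(a'-c')\theta_1 + (b'-c')\theta_2 + (c'-2m)\pi = 0$, which using the two relations above collapses to $(a'-c')(\theta_1-\theta_2) = 0$ and therefore contains the entire isosceles line $\theta_1 = \theta_2$. Consequently $\mathcal O(C)$ contains an open interval $I$ of base angles around $\alpha$ on the isosceles line; any such interval contains values $\frac{a}{b}\pi$ in lowest terms with $b$ even (for instance dyadic values $\frac{a}{2^n}\pi$ with $a$ odd), giving the desired rational isosceles triangle in $\mathcal O(C)$.

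The step most likely to demand care is the one combining evenness and integrality of $\Theta$'s coefficients with irrationality of $\alpha/\pi$ to force the rotation equation to be trivial along the isosceles line; this rules out the a priori possibility that $\mathcal O(C)$ meets the isosceles line transversely at the single irrational point $T$, and thereby guarantees the existence of nearby rational isosceles triangles inside $\mathcal O(C)$.
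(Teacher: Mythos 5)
Your proposal is correct and takes essentially the same route as the paper: both invoke the even-integer rotation function $\Theta$ from Lemma~\ref{tiles lemma}, use irrationality of the base angle to force the periodicity constraint to be satisfied along the entire isosceles line, and then combine (the proof of) Corollary~\ref{tiles classification} with density of even-denominator rational base angles in that line. The only cosmetic difference is that the paper splits into the cases $\Theta\equiv 0$ versus $\ker\Theta$ equal to the isosceles line, whereas you bypass the case split by noting that the level set contains the isosceles line either way.
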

\begin{proof}
Recall the function $\Theta$ from Lemma ~\ref{tiles lemma}. By the lemma, the zeros of $\Theta$ satisfy
$x\theta_1+y\theta_2+z\pi=0$ for some $x,y,z\in\mathbb Z$, and since $\theta_1=\theta_2=\theta$ we get $(x+y)\theta+z\pi=0$. If $x+y=0$ then $z=0$ as well. 
Thus either $\Theta$ is identically $0$, in which case the orbit tile is open, or $\ker \Theta$ is the line of isosceles
triangles so by Corollary ~\ref{tiles classification} the orbit tile is an open subset of this line. 
Since the set of isosceles triangles with base angle $\frac{a}{b}\pi$ where $b$ is even is dense in the line of isosceles triangles, the orbit tile must contain some such triangle.
If $x+y\ne 0$ then $\theta=\frac{-z}{x+y}\pi$ and the third angle is $\pi+\frac{2z}{x+y}\pi$, contradicting the fact that $T$ is irrational.
\end{proof}

\begin{figure}
\begin{tikzpicture}[scale=1.15]
\draw (0,0) -- node[midway, left]{2} (1.5,1.8) -- node[midway, right]{1} (3,0) -- node[midway, below]{3} (0,0);
\draw (.3,0) arc (0:30:5.6mm);
\draw (.5,.22) node{$\frac{a}{b}\pi$};
\draw (2.75,.3) arc (150:180:5.6mm);
\draw (2.46,.24) node{$\frac{a}{b}\pi$};
\draw (5,0) -- node[midway, left]{2} (5,1.8) -- node[midway, right]{1} (6.5,0) -- node[midway, below]{3} (5,0);
\draw (5,.2) -- (5.2,.2) -- (5.2,0);
\draw (6.25,.3) arc (150:180:5.6mm);
\draw (5.96,.24) node{$\frac{a}{b}\pi$};
\end{tikzpicture}
\caption{The triangles $T$ and $T'$.}
\label{labeling figure}
\end{figure}
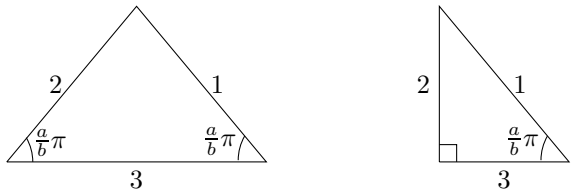

Note that Lemma ~\ref{rational} does not show that every periodic trajectory on an irrational isosceles triangle is stable. Indeed,
it is an easy exercise to show that the orbit tile of the combinatorial type $3132$ contains all isosceles triangles,
but no other triangles, hence is associated with an unstable trajectory.

\subsection{Theorem ~\ref{main theorem}}

In this section, we assume $b$ is even. The bulk of Theorem ~\ref{main theorem} is contained in the following lemma, which is illustrated by Figure 3.2.

\begin{lem}\label{double-cover}
There exists a double-cover $\pi:R(T)\to R(T')$ of Riemann surfaces such that $\dz_T=\dz_{T'}\circ \d\pi$.
Furthermore, the deck transformation $\lambda$ preserves combinatorial types.
\end{lem}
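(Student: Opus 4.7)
The plan is to build $\pi$ by splitting each copy of $T$ in $R(T)$ into two copies of $T'$ along the axis of symmetry, then mapping these halves to pieces of $R(T')$ by the identity on $\C$-coordinates.

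First I would establish $G(T) = G(T')$. Placing $T$ with axis of symmetry vertical, $e_1^T$ and $e_3^T$ have the same directions as $e_1^{T'}$ and $e_3^{T'}$, so $r_1^T = r_1^{T'}$ and $r_3^T = r_3^{T'}$. Writing $\sigma$ for the reflection across the axis of symmetry of $T$, we have $r_2^{T'} = \sigma$ and $r_2^T = \sigma r_1^T \sigma$, which gives $G(T) \subseteq G(T')$. For the reverse inclusion, $\sigma = (-I)\,r_3$, and $-I$ (rotation by $\pi$) lies in the rotation subgroup of $G(T)$---which is cyclic of order $b$ generated by the rotation by $2\pi a/b$---precisely when $b$ is even. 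Thus $G := G(T) = G(T')$, a dihedral group of order $2b$.

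To define $\pi$, position $T'$ so that the right half $T^R$ of $T$ coincides with $T'$ as a subset of $\C$; then $T^L = \sigma T'$. For each $\alpha \in G$, decompose $\alpha T = \alpha T^R \cup \alpha T^L$ and define $\pi$ to be the identity on $\C$-coordinates on each half, sending $\alpha T^R$ to $\alpha T' \subset R(T')$ and $\alpha T^L$ to $\alpha\sigma T' \subset R(T')$. The condition $\dz_T = \dz_{T'} \circ \d\pi$ follows immediately. Continuity across edges reduces to four algebraic checks: the axis of symmetry inside $\alpha T$ corresponds to the $e_2^{T'}$-gluing $\alpha T' \leftrightarrow \alpha\sigma T'$ in $R(T')$ (since $r_2^{T'} = \sigma$); the gluings across $\alpha e_1^T$ and $\alpha e_3^T$ are preserved by $r_1^T = r_1^{T'}$ and $r_3^T = r_3^{T'}$ (with $r_3\sigma = \sigma r_3 = -I$ needed to match the base on the left halves); and the gluing across $\alpha e_2^T$ requires the identity $r_2^T \sigma = \sigma r_1^{T'}$, which holds as both sides equal the rotation by $2\theta - \pi$.

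Each $\beta T' \subset R(T')$ then has two preimages in distinct copies of $R(T)$, namely $\beta T^R \subset \beta T$ and $\beta\sigma T^L \subset \beta\sigma T$, so $\pi$ is a double cover. The deck transformation $\lambda$ swaps these preimages; it commutes with the $G$-action and descends to the quotient $R(T)/G = T$ as the isosceles reflection $\sigma$. I expect the main obstacle to be the last assertion, that $\lambda$ preserves combinatorial types: at the level of trajectories $\lambda$ sends each edge of $R(T)$ labelled $1$ or $2$ to an edge carrying the opposite label, so the claim must rely on interpreting ``combinatorial type'' up to the implicit isosceles symmetry of $T$, consistent with the $G$-action equivalence on $R(T)$-sequences mentioned in the remark after Lemma \ref{same combinatorial type}. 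Once this interpretation is in place the verification is immediate, since $\lambda$ is a local isometry carrying trajectories to trajectories and its edge-effect is realized by $\sigma \in G$.
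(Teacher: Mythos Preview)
Your construction of $\pi$ and your verification that $G(T)=G(T')$ follow the same outline as the paper's.  There is, however, a bookkeeping discrepancy on the left half: you send $\alpha T^L$ to the copy indexed $\alpha\sigma$ by the \emph{identity} on $\C$-coordinates, whereas the paper sends it to the copy indexed $r\alpha=\sigma\alpha$ by first applying $r$ to the coordinate.  With the paper's gluing convention (Construction~\ref{construction} glues $\alpha P$ to $r_i\alpha P$, i.e.\ by \emph{left} multiplication), your axis check ``$\alpha T'\leftrightarrow\alpha\sigma T'$'' is not the $e_2$-gluing in $R(T')$; that gluing is $\alpha T'\leftrightarrow r_2\alpha T'=\sigma\alpha T'$.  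So as written your $\pi$ does not patch across the axis for general $\alpha$.

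This discrepancy is exactly what produces the label-swap you worry about.  With the paper's $\pi$ the deck transformation is $\lambda:\vp_\alpha(w)\mapsto\vp_{r\alpha}(rw)$, which is precisely the left $G(T)$-action of $r=\sigma$; it carries the edge $\vp_\alpha(\alpha e_i)$ to $\vp_{r\alpha}((r\alpha)e_i)$, still labelled $i$.  Hence the paper's $\lambda$ \emph{preserves} edge labels, and no ``isosceles symmetry'' of combinatorial types is needed.  Your appeal to the remark after Lemma~\ref{same combinatorial type} would not have saved a genuine $1\leftrightarrow 2$ swap anyway: the $G(P)$-action defined there permutes copies by left multiplication and therefore fixes the label $i$ on every edge, so it cannot realize the permutation you describe.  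The paper closes the argument by the explicit identity $\lambda\circ f(t)=(r_1r_3)^{b/2}\circ f(1-t)$, obtained from $r=r_3(r_1r_3)^{b/2}$ together with $f(0)\in e_3$; since reversal is built into the definition of combinatorial type and $(r_1r_3)^{b/2}\in G(T)$, this gives $\lambda\circ f\sim f$ directly.  If you switch to the paper's indexing on the left half, your construction goes through and this last obstacle evaporates.
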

\begin{proof}
Note that $G(T)=G(T')$, since $r_1$ and $r_3$ agree in both groups while $r_3(r_1r_3)^{b/2}\in G(T)$ equals $r_2\in G(T')$.
Let $r$ denote these maps. Let $\vp_\alpha$ denote the inclusion map into $R(T)$ and $\vp_\alpha'$ the inclusion map into $R(T')$.
For $z$ in the image of $\vp_\alpha$, define $\pi:R(T)\to R(T')$ by
\[\pi(z)=\begin{cases}
\vp_{\alpha}'(\vp_{\alpha}^{-1}(z)) &\text{if } \Re(\alpha^{-1}\vp_{\alpha}^{-1}(z))\ge 0\\
\vp_{r\alpha}'(r\vp_{\alpha}^{-1}(z)) &\text{if } \Re(\alpha^{-1}\vp_{\alpha}^{-1}(z))\le 0
\end{cases}\]
A simple check shows that this is well-defined, locally biholomorphic and that $\dz_T=\dz_{T'}\circ \d\pi$.
Intuitively we are mapping each copy of $T$ in $R(T)$ onto a pair of copies of $T'$ in $R(T')$ identified along $e_2$.
Since each copy of $T$ is mapped onto two copies of $T'$, $\pi$ is a double-cover.

For $z$ in the image of $\vp_\alpha$, it is easy to verify that the deck transformation $\lambda$ is given by $\lambda(z)=\vp_{r\alpha}(r\vp_\alpha^{-1}(z))$.
Let $f$ be a periodic trajectory on $R(T)$. Clearly $\d\lambda$ is linear, thus $\lambda\circ f$ is also a periodic trajectory. In order to show that
$\lambda\circ f$ has the same combinatorial type as $f$, it suffices to show that $\lambda \circ f$ is the reversal of $(r_1r_3)^{b/2}\circ f$.
Note that $\lambda\circ f(0)= (r_1r_3)^{b/2}\circ f(0)$ as $f(0)\in e_3$. Since 
\[\d\lambda=\d r=-\d (r_1r_3)^{b/2}\]
we have $\d (\lambda \circ f)=-\d ((r_1r_3)^{b/2}\circ f)$, thus $\lambda\circ f(t)=(r_1r_3)^{b/2}\circ f(1-t)$.
\end{proof}
 
\begin{figure}
\begin{tikzpicture}
[scale=.73,midarrow/.style={thick,decoration={markings,mark=at position 0.5 with {\arrow{>}}},postaction={decorate}}]
    \foreach \x in {1,...,8}
    {   \ifthenelse{\isodd{\x}}
        {\xdef\mysign{1}}
        {\xdef\mysign{-1}}
        \fill[black!20!white,,opacity=0.4,shift={(0,\mysign*\octagonradius)}] (0,0) -- (45*\x+22.5:\octagonbigradius) -- (45*\x+67.5:\octagonbigradius) -- cycle;
        \fill[black!0!white,,opacity=0.4,shift={(0,\mysign*\octagonradius)}] (0,0) -- (45*\x+22.5:\octagonbigradius) -- (45*\x-22.5:\octagonbigradius) -- cycle;
        \draw[shift={(0,\mysign*\octagonradius)}] (0,0) -- (45*\x+22.5:\octagonbigradius);
        \draw[shift={(0,\mysign*\octagonradius)}] (0,0) -- (45*\x+67.5:\octagonbigradius);
    }
    \foreach \x in {0,...,6}
    {   \ifthenelse{\isodd{\x}}
        {\xdef\mysign{1}}
        {\xdef\mysign{-1}}
        \draw[midarrow,shift={(0,\mysign*\octagonradius)}] (45*\x-22.5:\octagonbigradius) -- (45*\x-67.5:\octagonbigradius);
        \draw[midarrow,shift={(0,-1*\mysign*\octagonradius)}] (45*\x+112.5:\octagonbigradius) -- (45*\x+157.5:\octagonbigradius);
    }
    \draw[shift={(0,\octagonradius)}] (247.5:\octagonbigradius) -- (292.5:\octagonbigradius);
    \draw[thick,->] (-\octagonbigradius,\octagonbigradius) to[bend right=15] node[left] {$\lambda$} (-\octagonbigradius,-\octagonbigradius);
    \draw[thick,->] (\octagonbigradius,-\octagonbigradius) to[bend right=15] node[right] {$\lambda$} (\octagonbigradius,\octagonbigradius);
    \draw[thick,->] (1.05*\octagonbigradius, 1.05*\octagonbigradius) -- node[label=90:$\pi$] {}  (3.44*\octagonbigradius, .24*\octagonbigradius);
    \draw[thick,->] (1.05*\octagonbigradius, -1.05*\octagonbigradius) -- node[label=270:$\pi$] {}  (3.44*\octagonbigradius, -.25*\octagonbigradius);
    \foreach \x in {1,...,8}
    {
        \fill[black!20!white,,opacity=0.4,shift={(3.02*\octagonbigradius,.9*\octagonbigradius+\mysign*\octagonradius)}] (0,0) -- (45*\x+22.5:\octagonbigradius) -- (45*\x+67.5:\octagonbigradius) -- cycle;
        \draw[shift={(3.02*\octagonbigradius,.9*\octagonbigradius+\mysign*\octagonradius)}] (0,0) -- (45*\x+22.5:\octagonbigradius);
        \draw[shift={(3.02*\octagonbigradius,.9*\octagonbigradius+\mysign*\octagonradius)}] (0,0) -- (45*\x:.924*\octagonbigradius);
     }
    \foreach \x in {0,...,7}
    {   
        \draw[midarrow,shift={(3.02*\octagonbigradius,-.025*\octagonbigradius)}] (45*\x:.922*\octagonbigradius) -- (45*\x+22.5:\octagonbigradius);
        \draw[midarrow,shift={(3.02*\octagonbigradius,-.025*\octagonbigradius)}] (45*\x:.922*\octagonbigradius) -- (45*\x-22.5:\octagonbigradius);
    }
\end{tikzpicture}
\caption{The surfaces $R(T)$ and $R(T')$ for a base angle of $\frac38\pi$.
The arrows indicate edge orientations. Parallel edges are identified iff they share the same orientation. 
The covering $\pi$ and deck transformation $\lambda$ are illustrated. Labels are not shown.}
\label{covering figure}
\end{figure}
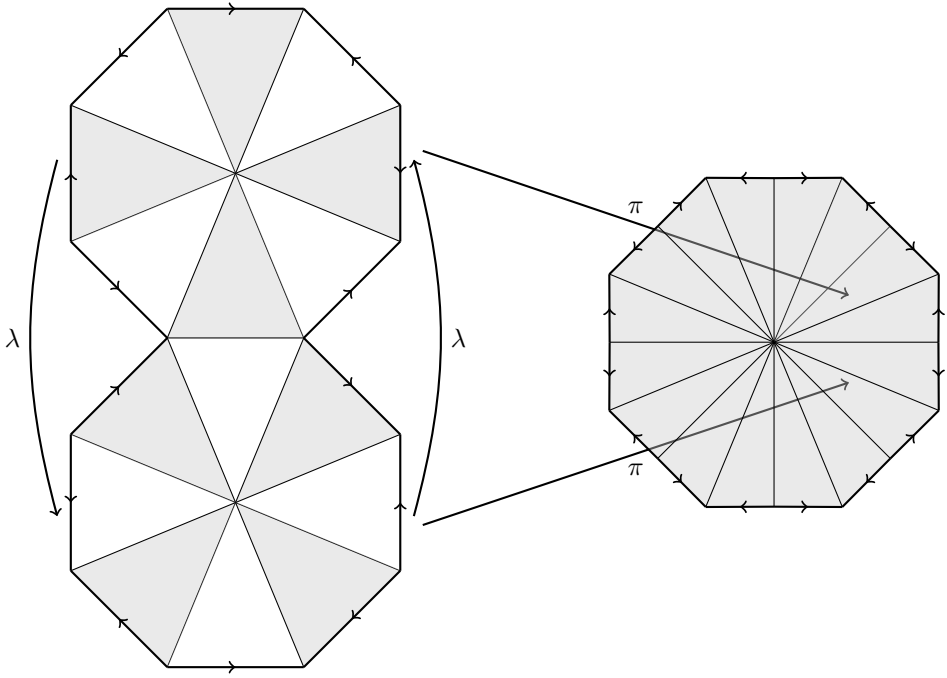

We are now able to prove Theorem ~\ref{main theorem} and Corollary ~\ref{corollary}.

\begin{proof}[Proof of Theorem ~\ref{main theorem}]
Note that the trajectories $f'$ and $g'$ on $T'$ obtained from $f$ and $g$ are $\pi\circ f$ and $\pi\circ g$. Since these have the same combinatorial type, by Lemma ~\ref{same combinatorial type}
up to reversal we have some $\alpha\in G(T')$ and some
cylinder of periodic trajectories $h:(0,1)\times [0,1]\to R(T')$ such that for some $u,v\in (0,1)$, $\pi\circ f=h_u$ and $\alpha\circ \pi\circ g=h_v$. Lifting this to
a function $\tilde{h}:(0,1)\times [0,1]\to R(T)$, we get that $\tilde{h}_s$ has constant argument for each $s\in (0,1)$ since $\dz_T=\dz_{T'}\circ \d\pi$. Since $\tilde{h}$ can be chosen such that
$\tilde{h}_u=f$, with this choice of $\tilde{h}$ we see that $\tilde{h}_u(0)=\tilde{h}_u(1)$, thus $\tilde{h}_s(0)=\tilde{h}_s(1)$ for all $s\in (0,1)$ as the lift of
the line $h_s(1):(0,1)\to R(T')$ is determined by $\tilde{h}_u(1)$, thus $\tilde{h}$ is a cylinder of periodic trajectories. It is easy to see that 
$\alpha\circ \pi = \pi \circ \alpha$, thus we get that $\tilde{h}_v$ is a lift of $\alpha\circ g$.
Since $\lambda$ preserves combinatorial types, it follows that $\tilde{h}_v$ has the same combinatorial type as $\alpha\circ g$.
Since $f$ and $\tilde{h}_v$ lie in the same cylinder of periodic trajectories, $f\sim \tilde{h}_v$.
Thus $f\sim g$ as well. 
\end{proof}

\begin{proof}[Proof of Corollary ~\ref{corollary}]
Let $f$ be a periodic trajectory on an isosceles triangle $T$ satisfying Condition ~\ref{condition}. 
Let $g$ be the trajectory obtained by reflecting $f$ across the line of symmetry of $T$. Since the roles of the two congruent edges
are reversed, it is clear that the orbit tile of $g$ is the reflection of that of $f$ across the line of isosceles triangles. But clearly $f'\sim  g'$, thus
by Theorem ~\ref{main theorem} $f\sim g$, hence the orbit tiles of $f$ and $g$ agree. It follows that the orbit tile of $f$ is symmetric across the line of isosceles triangles.
\end{proof}

\subsection{Theorem ~\ref{converse}}

We now assume $b$ is odd. In this case, instead of a double cover of $R(T')$ by $R(T)$, we get a biholomorphism, as shown in Figure 3.3. This is a result of the fact (which can be easily verified) that when $b$ is odd,
$r_2\in G(T')$ does not correspond to any element of $G(T)$, so $G(T)$ is a proper subgroup of $G(T')$.
Note that the under our embedding of $T$, the requirement that $x$ not be the center of the base is equivalent to $x\ne 0$.

\begin{lem}\label{biholomorphism}
$R(T)$ and $R(T')$ are biholomorphic, and the $1$-forms $\dz_T$ and $\dz_{T'}$ agree under this biholomorphism.
\end{lem}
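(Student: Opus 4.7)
The plan is to construct an explicit biholomorphism $\pi\colon R(T)\to R(T')$ along the same lines as the double cover of Lemma~\ref{double-cover}, now exploiting the fact that $G(T)$ is a proper, index-two subgroup of $G(T')$. First I would establish the group-theoretic picture: the reflections $r_1$ and $r_3$ lie in both $G(T)$ and $G(T')$, and writing $\rho=r_2\in G(T')$ for the reflection across the axis of symmetry of $T$, the third edge reflection of $T$ satisfies $r_2^{(T)}=\rho r_1\rho$; hence $G(T)\subseteq G(T')$. A direct count ($|G(T)|=2b$ and $|G(T')|=4b$ when $\gcd(a,b)=1$ and $b$ is odd) gives $[G(T'):G(T)]=2$, so $G(T')=G(T)\sqcup G(T)\rho$. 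For each $\alpha\in G(T)$ the two copies $\alpha T'$ and $\rho\alpha T'$ of $T'$ in $R(T')$---which are glued along the $e_2$-edge---contain exactly one representative from $G(T)$, so these pairs partition the copies of $T'$ comprising $R(T')$.

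Next I would define $\pi$ by the same case split as in Lemma~\ref{double-cover}, sending the right half of each copy $\alpha T\subset R(T)$ (with $\alpha\in G(T)$) onto $\alpha T'\subset R(T')$ via the inclusion, and the left half $\alpha\rho T'$ onto $\rho\alpha T'\subset R(T')$ via the unique complex-linear identification whose restriction to the shared axis of $\alpha T$ matches the $e_2$-gluing of $R(T')$. Because each copy of $T$ is thereby sent bijectively to its matching adjacent pair in $R(T')$ and these pairs exhaust $R(T')$ exactly once, $\pi$ is a bijection.

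Well-definedness, local biholomorphicity, and the identity $\dz_T=\dz_{T'}\circ\d\pi$ then follow by direct verification mirroring Lemma~\ref{double-cover}. Along the axis of $\alpha T$ the two case-formulas agree by construction. Along the edges $e_1$ and $e_3$ of $\alpha T$, consistency is immediate because $r_1,r_3\in G(T)\cap G(T')$. Along $e_2^{(T)}$ (the left side of $T$, not an edge of $T'$), the single gluing in $R(T)$ between $\alpha T$ and $r_2^{(T)}\alpha T$ is matched in $R(T')$ by a composite of three edge gluings reflecting the factorization $r_2^{(T)}=\rho r_1\rho$. The identity on $1$-forms follows from the fact that $\pi$ acts locally either as the identity or as an element of $G(T')$ of even reflection parity, hence orientation-preserving, hence preserving $\dz$.

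The main subtlety is the verification along edge $e_2^{(T)}$, where a single gluing in $R(T)$ is matched by three consecutive gluings in $R(T')$; once the coset structure and the factorization $r_2^{(T)}=\rho r_1\rho$ are in hand, however, this is a routine calculation.
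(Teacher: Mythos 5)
Your overall route is the paper's route: the paper simply reuses the map $\pi$ constructed in Lemma~\ref{double-cover}, observes it is still a covering when $b$ is odd, and gets injectivity (hence biholomorphicity) from the fact that $G(T)$ is a proper, index-two subgroup of $G(T')$; your coset bookkeeping ($|G(T)|=2b$, $|G(T')|=4b$, $G(T')=G(T)\sqcup G(T)r_2$, each copy of $T$ covering exactly one pair of $T'$-copies glued along $e_2$, and these pairs partitioning $R(T')$) is precisely the detail the paper leaves implicit, and it is correct.

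The one step I would not accept as written is your justification of $\dz_T=\dz_{T'}\circ\d\pi$. You argue that on each half of a copy of $T$ the map $\pi$ is, in planar terms, either the identity or an orientation-preserving element, "hence preserving $\dz$." That inference is false: an orientation-preserving linear map is multiplication by a unit complex number $c$, and it pulls $\dz$ back to $c\,\dz$, which differs from $\dz$ whenever $c\neq 1$; and with your choice of pairing (left half of $\alpha T$ sent to the copy indexed by $r_2\alpha$ via the rotation $r_2\alpha r_2\alpha^{-1}$) this rotation is genuinely nontrivial for most $\alpha$, e.g.\ $\alpha=r_1$. The repair is to choose the identification so that $\pi$ is locally a translation (or the identity) in the planar charts, which is also what makes $\dz_{T'}$ well defined in the first place: in the translation-surface reading of Construction~\ref{construction} (parallel edges identified by translations, as in Figures~\ref{covering figure} and~\ref{biholomorphism figure}), the left half of $\alpha T$ should be matched with the copy of $T'$ indexed by $\alpha r_2$, onto which it maps by the identity, rather than with the copy indexed by $r_2\alpha$ via a rotation. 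With that choice the coset count goes through verbatim (the pairs $\{\alpha,\alpha r_2\}$, $\alpha\in G(T)$, still partition $G(T')$), the edge checks along $e_1$, $e_3$ and along $e_2$ of $T$ (using $r_2^{(T)}=r_2 r_1 r_2$) are the routine verifications you describe, and the identity of $1$-forms becomes immediate instead of needing an orientation argument.
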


\begin{proof}
It is easy to see that the map $\pi$ defined in Lemma ~\ref{double-cover} is also a covering in this case. Since $G(T)$ is a proper subgroup of $G(T')$,
we can see that in this case $\pi$ is injective, thus a biholomorphism. Since $\dz_T=\dz_{T'}\circ \d\pi$ by the lemma, the $1$-forms agree under $\pi$.
\end{proof}

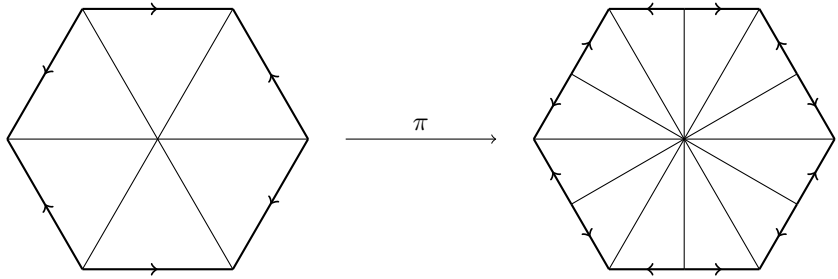
\begin{figure}
\begin{tikzpicture}
[scale=2,midarrow/.style={thick,decoration={markings,mark=at position 0.5 with {\arrow{>}}},postaction={decorate}}]
\draw[midarrow] (1,0) -- (.5,.866);
\draw[midarrow] (-.5,.866) -- (.5,.866);
\draw[midarrow] (-.5,.866) -- (-1,0);
\draw[midarrow] (-.5,-.866) -- (-1,0);
\draw[midarrow] (-.5,-.866) -- (.5,-.866);
\draw[midarrow] (1,0) -- (.5,-.866);
\draw (-1,0) -- (1,0);
\draw (-.5,-.866) -- (.5,.866);
\draw (.5,-.866) -- (-.5,.866);
\draw[midarrow] (3.5+.75,.433) -- (3.5+1,0);
\draw[midarrow] (3.5+.75,.433) -- (3.5+.5,.866);
\draw[midarrow] (3.5+0,.866) -- (3.5+.5,.866);
\draw[midarrow] (3.5+0,.866) -- (3.5+-.5,.866);
\draw[midarrow] (3.5+-.75,.433) -- (3.5+-.5,.866);
\draw[midarrow] (3.5+-.75,.433) -- (3.5+-1,0);
\draw[midarrow] (3.5+.75,-.433) -- (3.5+1,0);
\draw[midarrow] (3.5+.75,-.433) -- (3.5+.5,-.866);
\draw[midarrow] (3.5+0,-.866) -- (3.5+.5,-.866);
\draw[midarrow] (3.5+0,-.866) -- (3.5+-.5,-.866);
\draw[midarrow] (3.5+-.75,-.433) -- (3.5+-.5,-.866);
\draw[midarrow] (3.5+-.75,-.433) -- (3.5+-1,0);
\draw (3.5+-1,0) -- (3.5+1,0);
\draw (3.5+-.5,-.866) -- (3.5+.5,.866);
\draw (3.5+.5,-.866) -- (3.5+-.5,.866);
\draw (3.5+0,-.866) -- (3.5+0,.866);
\draw (3.5+.75,-.433) -- (3.5+-.75,.433);
\draw (3.5+-.75,-.433) -- (3.5+.75,.433);
\draw[->] (1.25,0) to node[above] {$\pi$} (2.25,0);
\end{tikzpicture}
\caption{The surfaces $R(T)$ and $R(T')$ for a base angle of $\frac{1}{3}\pi$.
The arrows indicate edge orientations. Parallel edges are identified iff they share the same orientation. Labels are not shown.}
\label{biholomorphism figure}
\end{figure}

This biholomorphism allows us to interpret the action of $G(T')$ on $R(T')$ as an action on $R(T)$. In particular, $r_2\in G(T')$ acts distinctly from any element of $G(T)$.
This gives us a way to construct trajectories violating Theorem ~\ref{main theorem}.
Combining this with Lemma ~\ref{vertices}, we are able to prove Theorem ~\ref{converse} and Corollary ~\ref{unstable}. 

\begin{proof}[Proof of Theorem ~\ref{converse}]
Let $f=\mathrm{Orb}(x,\theta)$ and assume $f$ is periodic. Suppose $f$ satisfies Theorem ~\ref{main theorem}.
Then $f\sim r_2\circ f$. Thus we have some $\alpha\in G(T)$ such that
$f$ and $\alpha\circ r_2\circ f$ lie in the same cylinder of periodic trajectories, up to reversal. Hence their arguments must
agree up to negation, so $\mathrm{d}f$ lies in the $\pm1$-eigenspace of $\d(\alpha\circ r_2)=\alpha r_2$. Since this cannot be the
identity, either it is a reflection across a line of angle a multiple of $\frac{a}{b}\pi$ or it is a nontrivial rotation. In the first
case, $\theta$ is of the form $n\frac{a}{b}\pi$ or $\frac{\pi}{2}+n\frac{a}{b}\pi$, and the set of such angles is finite.

Let $(a,b)$ be an interval in $S^1$. By Lemma ~\ref{vertices}, we have some vertex $v$ of a copy of $T$ in $R(T)$ lying along a trajectory passing through $0$
with argument $\phi\in (a,b)$ which is not of the form $n\frac{a}{b}\pi$ or $\frac{\pi}{2}+n\frac{a}{b}\pi$.
Suppose $\theta=\phi+\epsilon$,
with $|\epsilon|$ sufficiently small that $\phi+\epsilon$ is not of the form $n\frac{a}{b}\pi$ or $\frac{\pi}{2}+n\frac{a}{b}\pi$.
Thus $\alpha r_2$ must be a nontrivial rotation with real eigenvalue, hence is a rotation by $\pi$, so $\alpha=r_3$ and
$f$ lies in the same cylinder of periodic trajectories as the reversal of $r_3r_2\circ f$.
Let $h$ be the cylinder containing $f$ and the reversal of $r_3r_2\circ f$,
say with $h_u(t)=f(t)$ and $h_v(t)=r_3r_2\circ f(1-t)$. Then $h_s(0)$ lies in the $e_3$ and connects $x$ and
$r_2x$, thus $h$ has width at least $|x-r_2x|$ which is positive since $x\ne 0$.
Since the cylinder varies continuously with $\epsilon$, if $|\epsilon|$ is sufficiently small the cylinder contains the trajectory
connecting $0$ and $v$, thus contains $v$, a contradiction.
Hence the set of such $\theta$ is not dense in $(a,b)$, so this set is nowhere dense.
\end{proof}

\begin{proof}[Proof of Corollary ~\ref{unstable}]
It suffices to note that Theorem ~\ref{main theorem} is a statement about combinatorial types, which are constant on orbit tiles.
Thus if $f$ is a stable trajectory originating from $x$ with argument $\theta$, 
Theorem ~\ref{main theorem} holds for $f$ hence $\theta$ lies in the nowhere dense set from Theorem ~\ref{converse}.
\end{proof}

The same line of reasoning shows that Theorem\;\ref{converse} provides a converse to Corollary\;\ref{corollary} as well. If the orbit tile of a trajectory were equal to its reflection
across the line of isosceles triangles, then by Lemma\;\ref{tiles classification} either it is an open set or an open subset of the line of isosceles triangles. In either case, the
tile contains an isosceles triangle satisfying Condition\;\ref{condition}.

The assumption that $x\ne 0$ is crucial for Theorem ~\ref{converse}, as it is easy to see that when $x=0$, $r_2\circ f$ is the reversal
of $f$ and thus Theorem ~\ref{main theorem} holds for $f$. It is unclear whether it is necessary for Corollary ~\ref{unstable}; in fact it
is possible that Corollary ~\ref{unstable} holds for all polygons.

We close with a remark regarding further applications of these results.

\begin{rem}
Let $T$ be an isosceles triangle with base angle $\frac{a}{b}\pi$ where $b$ is odd.
The proof of Theorem ~\ref{converse}, together with Corollary ~\ref{corollary}, gives a classification of the stable trajectories on $R(T)$. Either they have
argument $0$ or $\pi/2$ up to the action of $G(T)$, or they lie in the same cylinder as some trajectory passing through the center
of the base of $T$. These latter trajectories are precisely the ``mirror trajectories'' first defined and studied by
Galperin and Zvonkine in \cite{mirror}. This classification may be of use in attacking open problems concerning the behavior of stable
trajectories on isosceles triangles, such as the conjecture in \cite{NearlyIsosc} that no finite collection of orbit tiles covers
any neighborhood of certain isosceles triangles with the property of Veech.
\end{rem}

\subsection*{Acknowledgments}
It is a pleasure to thank Peter May for his sponsorship through the University of Chicago's 
REU program, which supported my initial investigation, and my mentor Ilya Gekhtman for his guidance under this program. 
I would like to thank Pat Hooper and Richard Schwartz for developing the 
McBilliards software, which first suggested Corollary ~\ref{corollary} and led to my investigation of isosceles billiards.
In addition, I would like to thank Howard Masur and Richard Schwartz for their helpful advice and comments on a version of this manuscript.

\end{document}